\DeclarePairedDelimiter\floor{\lfloor}{\rfloor}
\pgfplotsset{compat=1.12}
\newtheorem{theorem}{Theorem}
\newtheorem*{thm}{Theorem}
\theoremstyle{definition}
\newtheorem{definition}{Definition}[section]
\theoremstyle{definition}
\newtheorem{lemma}{Lemma}[section]
\theoremstyle{definition}
\newtheorem{exmp}{Example}[section]
\newtheorem{corollary}{Corollary}[theorem]
\newtheorem{conjecture}{Conjecture}[theorem]
\newcommand{\R}{\mathbb{R}}
\providecommand{\keywords}[1]
{
  \small	
  \textbf{\textit{Keywords---}} #1
}
\providecommand{\msc}[1]
{
  \small	
  \textbf{\textit{Mathematics Subject Classification---}} #1
}
\title{Collapsing Maps and Quasi-Isometries}
\author{Joshua Thompson}
\author{Davin Hemmila}
\affil{Department of Mathematics and Computer Science, Northern Michigan University}
\date{\today}
\begin{document}

\maketitle

\begin{abstract}
We introduce a generalization of the b-metric we call a (b,c)-metric.  We show that if $X$ is a $(b,c)$-metric space and $\psi: X \longrightarrow Y$ is a quasi-isometry then $Y$ is $(b,c)$-metrizable.  We also define a particular kind of collapsing map that can be applied to an arbitrary $(b,c)$-metric space.  We define a distance function on the image of this collapsing map and with this prove that the collapsing map is a quasi-isometry.  
\end{abstract}

\keywords{quasi-isometry,b-metric,collapsing}

\msc{[2020],51F30,54B15}

\section{Introduction}

An isometry is a distance-preserving transformation of a metric space and a quasi-isometry is a transformation that distorts distances by a bounded amount.  Quasi-isometries are found in many places in mathematics and are ubiquitous in low-dimensional topology and geometric group theory \cite{Clay}.  The set of all isometries on a given metric space is a subject of classical study, see for example \cite{Dan}.  The set of all quasi-isometries on a given metric space is undoubtedly much more complex but also much less studied. Many examples of quasi-isometries are similar to self-maps that collapse or glue together distinct subsets.  Collapsing maps are a subset of all possible transformations of this form.  We show that if a map collapses a certain space in a certain way it will always be a quasi-isometry.

What we will mean by a certain space is that it be a globally Lipschitz $(b,c)$-metric space.  The Lipschitz condition prevents the space from being stretched too far in the new metric.  The $(b,c)$-metric space is a simple generalization of $b$-metric space studied by \cite{Tom}, which are almost metric spaces except the triangle inequality is relaxed by a multiplicative constant.  A $(b,c)$-metric is almost a metric space except that its triangle inequality is relaxed by both an additive and multiplicative constant.  Those familiar with quasi-isometries will note the obvious connection to $(b,c)$-metric spaces.     
 
 What we will mean for a map to collapse in a certain way is that do so in a bounded fashion. This means two things.  First, our collapsing maps are self-maps that move points by at most a bounded distance. Second, the distortion is controlled by a Lipschitz constant.  These two features of the collapsing map serve to control the distortion in the mapping and give rise to a quasi-isometry.  Our main theorem is as follows.
 
 \begin{thm}
    Let $(X,\rho)$ be a $(b,c)-$metric space and let $\phi: X \longrightarrow \sfrac{X}{\sim}$ be a collapsing map that collapses subset $S$ via equivalence relation $\sim$. If $\rho_\phi$ is the collapsed metric on $\sfrac{X}{\sim}$ then there are constants $K$ and $C$ such that 
    \[\frac{1}{K}\rho(x,y) - C \leq \rho_\phi([x],[y]) \leq  K\rho(x,y)+C \mbox{ } \forall x,y \in X \]
    Then $\phi$ a quasi-isometry between $(X,\rho)$ and $(\sfrac{X}{\sim},\rho_\phi)$.
   
\end{thm}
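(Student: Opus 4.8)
The plan is to verify the two defining conditions of a quasi-isometry separately: the quasi-isometric embedding inequality (the two-sided bound displayed in the statement) and coarse surjectivity. Since $\phi$ is the quotient map onto $\sfrac{X}{\sim}$, every class $[x]$ is literally the image $\phi(x)$, so $\phi$ is onto and coarse surjectivity holds with constant $0$. Thus the entire content reduces to producing constants $K$ and $C$ for which $\tfrac{1}{K}\rho(x,y)-C \le \rho_\phi([x],[y]) \le K\rho(x,y)+C$, and I would prove the upper and lower bounds in turn.

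For the upper bound I would use that $\rho_\phi$ is built as an infimum over admissible chains of representatives, so it is dominated by the value of any single chain. Feeding in the trivial one-step chain from $x$ to $y$, together with the global Lipschitz control on the collapsing map, gives $\rho_\phi([x],[y]) \le K\rho(x,y)+C$ after absorbing the additive defect of the $(b,c)$-triangle inequality into $C$ and the Lipschitz constant into $K$. This direction is routine: collapsing can only shorten the chain-infimum, and the Lipschitz hypothesis is exactly what prevents the quotient from stretching distances.

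The lower bound is the crux. Here I would start from a near-optimal admissible chain $x=p_1,q_1\sim p_2,q_2\sim\cdots\sim p_n,q_n=y$ whose segment lengths $\sum_i \rho(p_i,q_i)$ approximate $\rho_\phi([x],[y])$, and reconstruct an upper estimate for $\rho(x,y)$ by iterating the relaxed triangle inequality $\rho(u,w)\le b[\rho(u,v)+\rho(v,w)]+c$ along the full sequence $p_1,q_1,p_2,\dots,q_n$. Two facts make this work. First, each identification jump $\rho(q_i,p_{i+1})$ joins two points of the same class inside the collapsed set $S$, so it is bounded by the diameter of $S$, which is finite by the boundedness built into the collapsing map. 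Second, because only the single subset $S$ is collapsed, an optimal chain may be taken to have uniformly bounded length; this is essential, since each application of the $(b,c)$-inequality contributes a factor of $b$ and an additive $c$, and only a bounded number of applications keeps these from accumulating. Collecting the segment sum, the diameter contributions, and the finitely many additive defects yields $\rho(x,y)\le K\rho_\phi([x],[y])+KC$, which rearranges to the desired lower bound.

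I expect the lower bound to be the main obstacle, precisely because a general quotient metric can crush distances without control. What rescues it is the conjunction of three structural features of our collapsing maps: bounded point-movement, the finite diameter of the collapsed set, and the short-chain structure coming from collapsing a single subset. The delicate bookkeeping is confirming that the multiplicative $b$-factors and additive $c$-defects of the $(b,c)$-triangle inequality are incurred only a bounded number of times, so that they fold into the single pair of constants $K$ and $C$ rather than degrading with chain length.
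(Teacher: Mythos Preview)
Your argument rests on a misreading of the definition of the collapsed metric. In the paper, $\rho_\phi$ is \emph{not} an infimum over admissible chains of representatives; it is given by an explicit two-case formula: $\rho_\phi([x],[y])=\rho(x,y)$ when $\{x,y\}$ is not in the vicinity of $S$, and $\rho_\phi([x],[y])=\rho_p(x',y')+r_x+r_y$ otherwise, where $\rho_p$ is the path metric along the Lipschitz transversal $T$. Consequently your ``trivial one-step chain'' upper bound and your ``near-optimal chain'' lower bound do not engage the actual object. Moreover, your lower bound invokes the finite diameter of $S$, but the paper expressly allows $S$ to be unbounded (see the discussion after Example~1.2); only the \emph{fibers} are bounded, with maximal fiber length $f$, and it is this $f$---not $\mathrm{diam}(S)$---that controls the argument.

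The paper's route is different in kind. For the upper bound it compares $\rho_p(x',y')$ to $\rho(x',y')$ via the Lipschitz constant $L$ of $T$ (Lemma~2.1), then compares $\rho(x',y')$ to $\rho(x,y)$ by pushing along fibers of length at most $f$ and applying the $(b,c)$-triangle inequality twice (Lemma~2.2); combining these with the vicinity split yields $\rho_\phi([x],[y])\le K\rho(x,y)+C$ (Lemmas~2.3--2.5). For the lower bound it bounds $\rho(x,y)$ above by a fixed number of $(b,c)$-triangle steps through $x',y'$ and the nearest points $x_S,y_S$, again using only the fiber bound $f$, and recovers $r_x+r_y$ inside $\rho_\phi$ (Lemma~2.6). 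Because the explicit formula has a fixed shape, the $(b,c)$-inequality is applied a uniformly bounded number of times automatically---there is no chain-length bookkeeping to do. If you want to salvage your outline, you would need to replace the chain picture by this explicit formula and use the fiber bound $f$ together with the Lipschitz constant of $T$ in place of $\mathrm{diam}(S)$.
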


 \subsection{Quasi-Isometry and $(b,c)$-metric Spaces}
 
 Here we define the basic terms and prove a Lemma which shows that quasi-isometries preserve $(b,c)$-metrics.  
 
 \begin{definition}
 A quasi-isometry is a mapping between a $(b,c)$-metric space X and a semi-metric Y, $f:X \to Y$ such that $$\frac{1}{K}\rho(x,y)-C \leq \rho_\phi(f(x),f(y)) \leq K \rho(x,y) + C$$
for some constants $C \in [0,\infty), K \in [1,\infty)$.
 \end{definition}
 
 \begin{exmp}\label{example_1} The step function is a quasi-isometry from the reals to the integers. Let $f: \mathbb{R}\longrightarrow \mathbb{Z}$ by $f(x) = \floor x$ be the least integer function.  Then $f$ is quasi-isometry from $\mathbb{R}$ to $\mathbb{Z}$ with $K=1$ and $C=1$.
 \end{exmp}
 
 We are particularly interested in maps that collapse subsets, similar to what happens under quotient maps of topological spaces.  It is well known that a bounded metric space is quasi-isometric to a point.  Therefore, the following example should not be surprising.

 \begin{exmp}
 Let $X = \mathbb{R}^2$ and let $D$ be the closed unit disk.  Let $f$ be the quotient map that collapses the unit disk to a point.  Then $f:X \longrightarrow Y$ where $Y = \sfrac{X}{\sim}$ where $x \sim y$ if both $x$ and $y$ lie in the closed unit disk.   One can define a metric $\sigma$ on $Y$ by requiring that $\sigma([x]) = \rho(x)$ where $\rho$ is standard metric.  It follows that $(Y,\sigma)$ is quasi-isometric to $(X,\rho)$.
 \end{exmp}
 
 In this paper we collapse unbounded subsets, but they are unbounded in only one dimension.  These subsets are collapsed to sub-manifolds that have bounded variation (actually Lipschitz continuous) so the unbounded nature of the collapsed subset is effectively controlled by the Lipschitz condition.  So while general collapsing maps can kill the triangle inequality entirely we prove that our so-called Lipschitz collapsings give rise to $b$-metrics, metric spaces with a relaxed triangle inequality.

 This puts in the category of $b$-metric spaces, a generalization of metric spaces introduced by Bourbaki, and recently considered by Tomonoari \cite{Tom} .
 
 \begin{definition}
   A $b$-metric space is a pair (X, d) where X is a set and $d:X \times X \to \mathbb{R}^{+0}$ satisfying for all $x,y \in X$
 \begin{enumerate}
    \item $d(x,y) \geq 0$ with $d(x,y) = 0 \iff x = y$
    \item $d(x,y) = d(y,x) $
    \item There exists $b\geq 1$ such that $d(x,z) \leq b(d(x,y) + d(y,z))$
 \end{enumerate}

 \end{definition}
 
 \begin{exmp} \label{collapse rn}We collapse the unit ball $D$ in $\mathbb{R}^n$.   Let $\rho$ be the standard metric on $\mathbb{R}^n$ and let $\sim$ be the relation $x \sim y$ if both $x\in D$ and $y \in D$.  If $g$ is the geodesic between $x$ and $y$ let $E = g \cap D$ and $u$ be the length of $E$.  It is immediate that $\rho(x,y) = u+v$ for some $v \in \mathbb{R}$.   If we define $\sigma$ by $\sigma([x],[y]) = \rho(x,y) - u$ it follows that $\sigma$ is a $K$-metric on $\sfrac{X}{\sim}$ for $K = 2$.  The triangle inequality does not hold for all points in the collapsed space.  For example in $\mathbb{R}^2$, if $a=(-1.1,0)$, $b=(1.1,0)$ and $c =(1,1, 10)$ $$\sigma([a],[b]) + \sigma([b],[c]) = .2 + 10 < 10.24 \approx \sigma([a],[c]), $$  however $\sigma([a],[c]) \leq 2(\sigma([a],[b]) + \sigma([b],[c]))$. It is also the case that this map is a quasi-isometry of $\mathbb{R}^n$ with K = 1 and C = 1. 
 \end{exmp}

The distortion of a metric caused by a quasi-isometry is of a bounded nature and we show that this bounded distortion merely relaxes the triangle inequality, instead of destroying it.   Spaces that satisfy a "relaxed triangle inequality" are $b-$metric spaces as defined by Czerwik (1998).  That is, after multiplying by a constant $b$ a triangle inequality holds.  We expand this notion to a $(b,c)$-metric, where we multiply by $b$ and add $c$.  This is the appropriate setting to describe quasi-isometries.  
 
 By the term $\textit{semi-metric space}$ we refer to a space with a non-degenerate symmetric bi-linear form; that is, a metric space without a triangle inequality.

 \begin{definition}
 A (b,c)-metric space is a semi-metric space with the extra condition that 
 \[ d(x,z) \leq b(d(x,y) + d(y,z)) + c\]
 where $b \geq 1$ and $c \geq 0$.
 \end{definition}
 
 (b,c)-metrics arise from applying a quasi-isometry to a metric space without the restriction that the range must have a certain triangle inequality. 
 
 \begin{exmp}
 The Euclidean plane is a $(1,0)$-metric space and Example \ref{collapse rn} is a $(1,1)$-metric space.
 \end{exmp}
 
 \begin{lemma}
     If $f: X \rightarrow Y$ is a quasi-isometry where $X$ is a $(b,c)$-metric space and $Y$ is a semi-metric space, then $Y$ is a $(b',c')$-metric space.
        \end{lemma}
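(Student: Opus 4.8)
The plan is to reduce the relaxed triangle inequality in $Y$ to the one already known in $X$ by pulling triples of points back along $f$, applying the $(b,c)$-inequality in $X$, and then pushing the resulting estimate forward. The one structural assumption I need is that $f$ is (coarsely) surjective, so that every point of $Y$ has a preimage; for the collapsing quotient maps that motivate this paper surjectivity is automatic, so I will assume it here and flag it below as the only real subtlety.

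First I would fix three points $u,v,w \in Y$ and choose $x,y,z \in X$ with $f(x)=u$, $f(y)=v$, $f(z)=w$. Applying the right-hand (upper) bound of the quasi-isometry to the pair $(x,z)$ gives $d_Y(u,w) \le K\,d_X(x,z) + C$. I would then insert the $(b,c)$-inequality valid in $X$, namely $d_X(x,z) \le b\big(d_X(x,y)+d_X(y,z)\big)+c$, to obtain
\[ d_Y(u,w) \le Kb\big(d_X(x,y)+d_X(y,z)\big) + Kc + C. \]

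The next step converts the $X$-distances back into $Y$-distances using the left-hand (lower) bound of the quasi-isometry: from $\tfrac1K d_X(x,y) - C \le d_Y(u,v)$ I get $d_X(x,y) \le K\,d_Y(u,v) + KC$, and similarly $d_X(y,z) \le K\,d_Y(v,w) + KC$. Substituting these and collecting terms yields
\[ d_Y(u,w) \le K^2 b\big(d_Y(u,v)+d_Y(v,w)\big) + \big(2K^2 bC + Kc + C\big), \]
so the lemma holds with $b' = K^2 b$ and $c' = 2K^2 bC + Kc + C$. Since $K\ge 1$ and $b\ge 1$ we have $b' \ge 1$, and every term in $c'$ is nonnegative, so $c' \ge 0$; thus $(b',c')$ are admissible constants.

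The computation itself is just routine chaining of inequalities, so the only genuine obstacle is the surjectivity issue noted above: without a preimage for each of $u,v,w$ the argument cannot even start, and indeed the relaxed triangle inequality could fail at points of $Y$ lying outside the image of $f$. If one does not wish to assume surjectivity, the clean statement is that the image $f(X)$, equipped with the restricted distance, is a $(b',c')$-metric space; I would either phrase the lemma in that form or add the coarse-surjectivity hypothesis explicitly.
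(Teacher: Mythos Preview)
Your argument is correct and follows essentially the same route as the paper: pull back via the upper quasi-isometry bound, apply the $(b,c)$-inequality in $X$, and push forward via the lower bound, arriving at the identical constants $b'=bK^2$ and $c'=2bCK^2+Kc+C$. Your remark on surjectivity is well taken---the paper tacitly assumes it (its quasi-isometries are quotient maps), so your flag is a genuine clarification rather than a divergence.
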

 
 \begin{proof}
Since $f$ is a quasi-isometry and $X$ has a relaxed triangle inequality, we know: 
\[\rho_\phi(f(x),f(z)) \leq K\rho(x,z) + C \tag{1} \]
\[ \frac{1}{K} \rho(x,y) - C \leq \rho_\phi(f(x),f(y)) \tag{2} \] 
\[ \frac{1}{K} \rho(y,z) - C \leq \rho_\phi(f(y),f(z)) \tag{3} \] 
\[ \rho(x,z)\leq b(\rho(x,y) + \rho(y,z)) + c \tag{4}\]

Multiplying (4) by K and adding C we get:

\[ K\rho(x,z) + C\leq Kb(\rho(x,y) + \rho(y,z)) + Kc + C \tag{5}\]

Bringing the inequality in (1) to (5) we get:

\[\rho_\phi(f(x),f(z)) \leq K\rho(x,z) + C\leq Kb(\rho(x,y) + \rho(y,z)) + Kc + C \tag{6}\]

Multiplying (2) and (3) by bK$^2$, adding bCK$^2$ + $\frac{Kc + C}{2}$ and bringing the sum of (2) and (3) to (6) we get:

\[Kb(\rho(x,y) + \rho(y,z)) + Kc + C \leq\]
\[bK^2(\rho_\phi(f(x),f(y)) + \rho_\phi(f(y),f(z))) + 2bCK^2 + Kc + C\]

so
\[\rho_\phi(f(x),f(z)) \leq bK^2(\rho_\phi(f(x),f(y)) + \rho_\phi(f(y),f(z))) + 2bCK^2 + Kc + C\]

and setting bK$^2$ = $b'$ and 2bCK$^2$ + Kc + C = $c'$ we see Y is a $(b',c')$-metric.
 \end{proof}

\subsection{Collapsing Maps}
In this section we define a collapsing map and show that it preserves the $(b,c)$-metric structure of a space.

Given a (b,c)-metric space $X$ and a subset $S$ we wish to define a mapping on $X$ wherein we collapse $S$ to a proper subset of itself and glue the complement of $S$ in $X$ together along this proper subset.  This can be viewed as collapsing $S$ and \textit{bringing the complement along for the ride}.  This is quite similar to the concept of a quotient map of topological spaces.  Collapsing a set $S$ can be thought of partitioning $S$ into equivalence class and mapping every element of $S$ to its equivalence class.  

Quotient maps in metric spaces can be quite general and can destroy the metric completely.  Since we are collapsing a  To give ourselves some control we assume hereafter that the $(b,c)$-semi-metric space $(X,\rho)$ is a (globally) Lipschitz manifold.

\begin{definition}[Lipschitz] A function $f:X \longrightarrow Y$ is (globally) Lipschitz continuous if there exists a real constant $L \geq 0$ such that for all $x_1, x_2 \in x$ 
\[d_Y(f(x_1),f(x_2)) \leq L \cdot d_X(x_1,x_2) \]
We say the function $f$ is bi-Lipschitz if both $f$ and its inverse are Lipschitz. 
\end{definition}

\begin{exmp}
If $S = \{(x,y) \in \mathbb{R}^2 | y = \sin(x)+1$ then $S$ is the image of a (globally) Lipschitz function with Lipschitz constant $L = 1.$
\end{exmp}

Since the derivative of a Lipschitz function is bounded we can think of a (globally) Lipschitz manifold as one whose "steepness" is bounded above by a constant.  As we shall see, without this Lipschitz condition one can construct a collapsing of a smooth sub-manifolds that does not give quasi-isometry to our collapsed metric. There is a close relationship between quasi-conformal maps and Lipschitz homeomorphisms, see \cite{Luu}, but they do not consider quasi-isometries.

The sets we collapse come equipped with a fibering.  Recall, a fibered manifold is a manifold $E$ together with a projection $\pi: E \rightarrow c$ that is a surjective submersion.  Every point of $E$ is then contained in some fiber $\pi^{-1}(x)$.

\begin{definition}[Bounded fibered manifold] A fibered manifold where the length of every fiber is bounded.  
\end{definition}

\begin{exmp}
In the example above, the unit disk is a bounded fibrered manifold.  Note that the region of $\mathbb{R}^2$ between the graphs of $y=x^2$ and $y=x$ may be fibered using vertical lines, but this fibering is not bounded.  
\end{exmp}

For a given metric space $(X,\rho)$ and subset $T \subset X$, there are two natural metric spaces we consider: $(T, \rho)$ and $(T,\rho_p)$ where $\rho_p(a,b)$ is defined to be the infimum of all paths \textit{in T} from $a$ to $b$.  In our collapsing map $T$ can be thought of as the image of a retraction and we need this image to not be too wild. Our collapsing map definition requires the existence of a set which is to be collapsed.
 
\begin{definition}[Collapsing Set] \label{collapsing set} Let $X$ be an $n-$manifold with metric $\rho$.  The collapsing subset $S \subset X$ has the following properties.

\begin{enumerate}
    \item $S$ is a bounded fibered $n$-dimensional sub-manifold of $X$ with fibering $\pi: E \rightarrow X$.
    \item $T \subset S$ is a (n-1)-manifold transverse to each fiber $\pi^{-1}(x)$.
    \item The identity mapping $i: (T,\rho) \longrightarrow (T,\rho_p)$ is Lipschitz where $\rho_p$ is the path-metric in $T$ inherited from $\rho$.
\end{enumerate}

If a curve $T$ satisfies property (3) above we say that $T$ is a Lipschitz curve in $X$.
    \end{definition}
 
\begin{exmp}
Let $S$ be the region of $\mathbb{R}^2$ between $y = -2$ and $y = 2$.  This region can be fibered by vertical segments.  Let $T = \{(x,y) \in \mathbb{R}^2 \mbox{ } | \mbox{ } y = \cos(2x)\}$. It is clear that $T$ is transverse to all vertical segments in $S$ and is the image of a Lipschitz function $f(x) = \cos(2x)$ with Lipschitz constant $L = 2$. Consequently the derivative is bounded at all $x$: \[|f'(x)| \leq 2\] thus \[ \rho_p(a,b) = \int_a^b \sqrt{1 + (f'(t))^2}dt = \int_a^b \sqrt{1 + 4\sin^2(2t)} dt \]
\[ \leq \int_a^b \sqrt{1+4(2)^2} dt = \sqrt{17}*|b-a| = \sqrt{17}\rho(a,b)\] where $\rho$ is the standard metric on $\mathbb{R}^2$.  Thus the identity mapping $i:(T,\rho) \longrightarrow (T,\rho_p)$ is Lipschitz.
\end{exmp}

Let $S \subset X$ be a collapsing set. We define a relation $\sim$ called the \textit{collapsing relation}, on $X$ by \[x \sim y  \mbox{ if } x \in \pi^{-1}(z) \mbox{ and } y \in \pi^{-1}(z) \] for some $z \in X$ where $\pi^{-1}(z)$ is a fiber of $S$. Since the subset $T \subset S$ is transverse to each fiber each element $t \in T$ appears in exactly one equivalence class.  For any $x \in S$ let $x' = T \cap [x]$. 

\begin{lemma}
The collapsing relation is an equivalence relation.
\end{lemma}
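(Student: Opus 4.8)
The plan is to recognize $\sim$ as (essentially) the fiber relation, or \emph{kernel equivalence}, of the projection $\pi$, together with the diagonal on the complement of $S$; relations of this form are automatically equivalence relations. Concretely, I would first fix the reading of the definition that makes $\sfrac{X}{\sim}$ well defined on all of $X$: we declare $x \sim y$ precisely when either $x = y$ or $x$ and $y$ lie in a common fiber $\pi^{-1}(z)$ of $S$. On $S$ this is exactly the condition $\pi(x) = \pi(y)$, while on $X \setminus S$ it degenerates to equality. With this reading in hand, the task reduces to checking the three defining axioms.

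Reflexivity and symmetry I expect to dispatch immediately. Reflexivity holds because $x = x$ forces $x \sim x$ for every $x \in X$; on $S$ one may equivalently note that each point of $E = S$ belongs to some fiber, since $\pi$ is a surjective submersion, so $x \in \pi^{-1}(\pi(x))$. Symmetry is equally direct, as the defining condition ``$x, y \in \pi^{-1}(z)$ for some $z$'' is manifestly symmetric in the roles of $x$ and $y$, and equality is symmetric.

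The only step requiring a genuine argument is transitivity, and this is where the fact that $\pi$ is a single-valued map does the work. Suppose $x \sim y$ and $y \sim z$. After absorbing the trivial cases in which one of the two relations is an equality, we may assume $x, y \in \pi^{-1}(a)$ and $y, z \in \pi^{-1}(b)$ for some $a, b \in X$. Since $y$ then lies in both fibers and $\pi$ assigns $y$ a single value, we must have $a = \pi(y) = b$; that is, distinct fibers of $\pi$ are disjoint. Hence $x, z \in \pi^{-1}(a)$, so $x \sim z$.

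I expect the main (and fairly minor) obstacle to be bookkeeping rather than mathematics: one must take care that the relation is defined on all of $X$ and not merely on $S$, so that reflexivity also holds at points outside the collapsing set, and one must explicitly invoke the disjointness of the fibers — a consequence of $\pi$ being a function — to close the transitivity case. Notably, none of the metric structure, the Lipschitz hypotheses, nor the transversality of $T$ enters this argument; those conditions become relevant only later, when the collapsed distance $\rho_\phi$ is analyzed.
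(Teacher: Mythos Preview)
Your proof is correct and follows essentially the same approach as the paper: verify reflexivity, symmetry, and transitivity directly from the definition of $\sim$. Your treatment is in fact more careful than the paper's---you explicitly extend $\sim$ to all of $X$ via the diagonal on $X\setminus S$, and in the transitivity step you invoke the single-valuedness of $\pi$ to conclude that two fibers containing a common point $y$ must coincide, whereas the paper tacitly uses the same fiber $f$ for both $x\sim y$ and $y\sim z$ without justification.
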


\begin{proof} Let $f = \pi^{-1}(w)$ be fiber of the S.  Reflexivity follows immediately because if $x \in f$ then $x \in f$ as well,  and therefore $x \sim x$ . The relation is symmetric because saying $x \in f$ and $y \in f$ is the same as saying $y \in f$ and $x \in f$.  Lastly transitivity holds because if $x \sim y$ and $y \sim z$ then $x \in f$ and $y \in f$ holds as well as $y \in f$ and $z \in f$.  Clearly then $x \in f$ and $z \in f$ which is precisely $x \sim z$.  \end{proof}

\begin{definition}[Collapsing Map]\label{collapsing map}
Let $S$ be a collapsing set in $X$.  The map $\phi: X \to \sfrac{X}{\sim}$ is given by $\phi(x) = [x]$ where $[x]$ is the equivalence class of $x$ under the relation $\sim$ on $S$.  We denote this map by the triple $(X,S,\phi)$ and 
write $\sfrac{X}{\sim} = X^*$.   
\end{definition}

To obtain a semi-metric on the collapsed space we deform the distance between points that are closer to the collapsing set than they are to each other.  We do this as follows.  Let $r_x$ denote the minimum distance from $x$ to the collapsing set $S$.  This is realized by taking balls of radius $r$ about $x$ and increasing the radius until the ball intersects $S$.  This distance is the least such radius.  If $d(x,y) > r_x + r_y$ we say the pair $\{x,y\}$ is in the \textit{vicinity} of $S$.  Note that if $x \in S$ then $r_x = 0$ which implies $d(x,y) > r_x + r_y$  for all $y \in X$.  That is, if $x \in S$ then $\{x,y\}$ is in the vicinity of $S$ for all $y \in X$.

\begin{definition}
Given a metric $\rho$ on $X$ and collapsing map $(X,S,\phi)$ we define $\rho_\phi : X^* \times X^* \rightarrow \mathbb{R}$ as follows: 
Let $[x],[y] \in X^*$. If $\{x,y\}$ is not in the vicinity of $S$ we define $$\rho_\phi([x],[y]) = \rho(x,y)$$ and 
 if $\{x,y\}$ is in the vicinity of $S$ we define $$\rho_\phi([x],[y]) = \rho_p(x',y') + r_x + r_y $$ where $x' = T \cap [x]$ and $y' = T \cap [y]$ and $\rho_p$ is the path metric in $T$ inherited from $\rho$.

\end{definition}

\begin{theorem} Let $(X,\rho)$ be a semi-metric space.  The map $\rho_\phi: X^* \times X^* \longrightarrow \mathbb{R}$ given by collapsing map $\phi$ gives a semi-metric space $(X^*, \rho_\phi)$.
\end{theorem}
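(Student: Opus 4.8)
The plan is to verify the three defining properties of a semi-metric for $\rho_\phi$ — non-negativity, symmetry, and the identity of indiscernibles $\rho_\phi([x],[y]) = 0 \iff [x]=[y]$ — together with the prerequisite that $\rho_\phi$ is well-defined on $X^* \times X^*$, i.e. independent of the representatives chosen from the classes $[x]$ and $[y]$. Since a semi-metric carries no triangle inequality, these are the only obligations. Because $\rho_\phi$ is defined piecewise according to whether $\{x,y\}$ lies in the vicinity of $S$, I would first record that the strict inequality $\rho(x,y) > r_x + r_y$ partitions every pair into exactly one of the two cases, so that $\rho_\phi$ is unambiguously single-valued once representatives are fixed.

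First I would handle well-definedness. Adopting the convention $[x] = \{x\}$ for $x \notin S$, the only classes with more than one representative are the fibers contained in $S$; for any $x \in S$ one has $r_x = 0$ for every representative, so by the remark that $x \in S$ forces $\{x,y\}$ into the vicinity of $S$, the second formula always applies and the value $\rho_p(x',y') + r_x + r_y$ is used. It therefore suffices to observe that $r_x$ and $x'$ depend only on $[x]$: the quantity $r_x = \rho(x,S)$ is $0$ for every point of a fiber in $S$, and since $T$ is transverse to and meets each fiber in exactly one point, $x' = T \cap [x]$ is the unique point of $T$ determined by the fiber, hence by $[x]$ alone. The same holds for $y$, so the value is independent of the chosen representatives.

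Symmetry and non-negativity I would then dispatch directly from the two formulas. In the non-vicinity case $\rho_\phi([x],[y]) = \rho(x,y) \geq 0$ and equals $\rho(y,x)$ because $\rho$ is a semi-metric; in the vicinity case the value $\rho_p(x',y') + r_x + r_y$ is a sum of non-negative terms and is unchanged under swapping $x \leftrightarrow y$, since $\rho_p$ is symmetric and the vicinity condition $\rho(x,y) > r_x + r_y$ is itself symmetric. The crux is the identity of indiscernibles. For the reverse implication, if $[x] = [y]$ then either the class is a singleton $\{x\}$ with $x \notin S$, in which case $\{x,x\}$ is not in the vicinity and $\rho_\phi = \rho(x,x) = 0$, or $x,y$ lie in a common fiber of $S$, in which case $r_x = r_y = 0$ and $x' = y' = T \cap [x]$, giving $\rho_p(x',x') + 0 + 0 = 0$. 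For the forward implication, suppose $\rho_\phi([x],[y]) = 0$. In the non-vicinity case $\rho(x,y) = 0$ forces $x = y$, hence $[x]=[y]$. In the vicinity case all three non-negative summands must vanish: $r_x = r_y = 0$ places $x$ and $y$ in $S$ (using that $S$ is closed, so that $\rho(\cdot,S)=0$ characterizes membership), while $\rho_p(x',y') = 0$ forces $x' = y'$ because $\rho_p$ is a genuine metric on the connected manifold $T$; the common $T$-point lies in a single fiber, so $x$ and $y$ share a fiber and $[x]=[y]$.

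I expect the main obstacle to be the bookkeeping around the point $x'$ and its dependence on $x$ when $x \notin S$ but $\{x,y\}$ is nonetheless in the vicinity of $S$: here $x'$ must be read as the point of $T$ lying in the fiber through a nearest point of $S$ to $x$, and one must confirm this is well-defined (unique once a nearest point is selected, and canonical because each $T$-point sits in exactly one fiber). The supporting facts I would lean on throughout — that $r_x = 0 \iff x \in S$, that $\rho_p$ is a metric so that $\rho_p(a,b)=0 \Rightarrow a=b$, and that transversality makes $T \cap [x]$ a single point — are essentially the regularity hypotheses built into the collapsing-set definition, so the statement should follow once these are invoked cleanly.
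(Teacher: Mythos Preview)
Your proposal is correct and follows essentially the same approach as the paper: split into the vicinity and non-vicinity cases, inherit the semi-metric properties from $\rho$ in the first case, and in the second case use non-negativity of each summand together with the fact that $\rho_p$ is a semi-metric to obtain $\rho_\phi([x],[y])=0 \Rightarrow r_x=r_y=0$ and $x'=y'$, hence $[x]=[y]$. Your write-up is in fact more complete than the paper's, which omits the well-definedness check, symmetry, and the reverse implication $[x]=[y]\Rightarrow \rho_\phi([x],[y])=0$ that you supply.
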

\begin{proof}
Let $[x],[y] \in X^*$. If $\{x,y\}$ is not in the vicinity of $S$ then $\rho_\phi([x],[y]) = \phi(x,y)$.  In this case $\rho_\phi$ satisfies the properties of a semi-metric since $\rho$ does.  Now assume $\{x,y\}$ is in the vicinity of $S$.
\begin{enumerate}
    \item Since $\rho_p$ is a semi-metric and $r_x \geq 0$ for all $x\in X$ we have $$\rho_p(x',y') + r_x + r_y \geq 0$$ from which it is immediate that $\rho_\phi([x],[y]) \geq 0$.  
    \item If $\rho_\phi([x],[y]) = 0$ then $r_x = r_y = 0$ which implies  $x \in S$ and $y \in S$ and $[x] = x'$ and $[y]=y'$.  So $\rho_p(x',y') = 0$ which implies $x' = y'$ since $\rho_p$ is a semi-metric.  Therefore $[x] = [y]$. 
\end{enumerate}
\end{proof}

\section{Collapsing Map is a Quasi-Isometry}
We know that if we apply a quasi-isometry to a metric space we get a semi-metric space, so to shed light on the collapsing map we'd like to show that it is a quasi-isometry.  In Lemma \ref{ratio} we compare the length of points on $T$ to their length of their image under the collapsing map.  We show that the ratio of the new distance to the original distance is bounded above by a constant that depends on the Lipschitz constant of $T$.  

\begin{center}
\includegraphics[scale=.8]{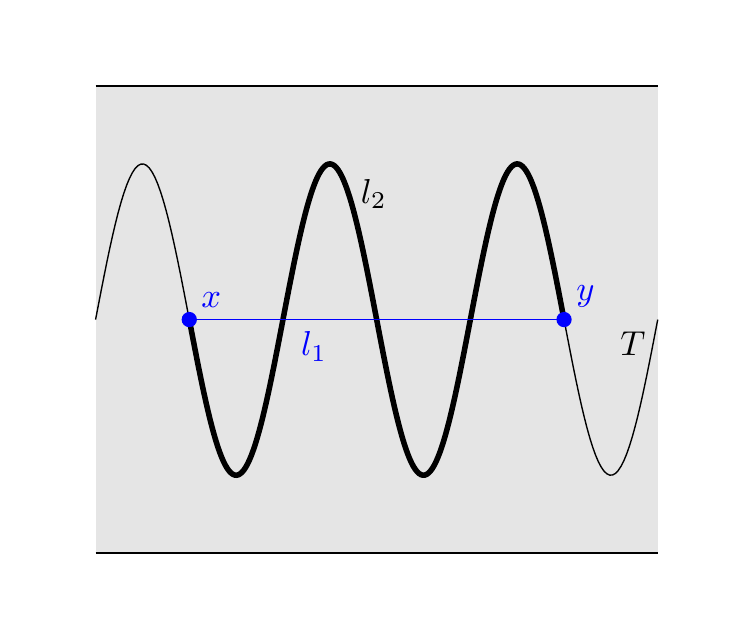}
\captionof{figure}{Two paths between $x$ and $y$.}
\end{center}

\begin{lemma}\label{ratio}
Let $T$ be a Lipschitz curve in the $(b,c)-$metric space $(X,\rho)$.  For all points $x, y$ in $T \subset X$ in  the ratio of their distance in the collapsed metric $\rho_\phi$ to that of the original metric $\rho$ is bounded.  That is, \[\forall \{x,y\} \in T, \exists K_L \in \mathbb{R} \mbox{ such that } \frac{\rho_\phi([x],[y])}{\rho(x,y)} \leq K_L .\]
\end{lemma}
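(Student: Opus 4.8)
The plan is to reduce the collapsed distance $\rho_\phi([x],[y])$ between two points of $T$ to the path-metric $\rho_p(x,y)$ and then invoke the defining Lipschitz property of a Lipschitz curve directly. First I would observe that every point $x \in T$ lies in $S$, so by the remark preceding the definition of $\rho_\phi$ we have $r_x = 0$; hence for any distinct $x, y \in T$ we get $\rho(x,y) > 0 = r_x + r_y$, which places the pair $\{x,y\}$ in the vicinity of $S$. Consequently the second (vicinity) clause in the definition of $\rho_\phi$ is the one that applies.

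Next I would pin down the representatives $x' = T \cap [x]$ and $y' = T \cap [y]$. Because $T$ is transverse to each fiber $\pi^{-1}(z)$, every equivalence class meets $T$ in exactly one point; and since $x$ and $y$ already lie on $T$, this forces $x' = x$ and $y' = y$. Substituting $r_x = r_y = 0$ then collapses the defining formula to
\[ \rho_\phi([x],[y]) = \rho_p(x',y') + r_x + r_y = \rho_p(x,y). \]

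Finally, property (3) in Definition \ref{collapsing set} states precisely that the identity map $i:(T,\rho) \longrightarrow (T,\rho_p)$ is Lipschitz, so there is a constant $L \geq 0$ with $\rho_p(a,b) \leq L\,\rho(a,b)$ for all $a,b \in T$. Applying this bound to the pair $x,y$ yields
\[ \frac{\rho_\phi([x],[y])}{\rho(x,y)} = \frac{\rho_p(x,y)}{\rho(x,y)} \leq L, \]
so the constant $K_L = L$ works.

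I expect no serious obstacle here: once the two bookkeeping identifications are made --- namely that $\{x,y\}$ lies in the vicinity of $S$, and that $x' = x$, $y' = y$ --- the statement is essentially a transcription of the Lipschitz hypothesis defining a Lipschitz curve. The only point genuinely requiring care is excluding the degenerate case $x = y$, where the ratio $\rho_\phi/\rho$ is $0/0$ and hence undefined; I would simply state the claim for distinct points $x \neq y$.
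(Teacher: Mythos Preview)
Your proposal is correct and follows essentially the same route as the paper: reduce $\rho_\phi([x],[y])$ to $\rho_p(x',y')$ via $r_x=r_y=0$, then apply the Lipschitz hypothesis on $T$ to bound $\rho_p$ by $L\cdot\rho$. If anything, your version is more careful than the paper's, since you explicitly verify that $\{x,y\}$ lies in the vicinity of $S$, that $x'=x$ and $y'=y$, and you flag the degenerate case $x=y$, all of which the paper leaves implicit.
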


\begin{proof}

 Since $T$ is a Lipschitz curve we know there is a map $f:(T,\rho) \longrightarrow (T,\rho_p)$ such that \[\rho_p(x,y) \leq L \cdot \rho(x,y)\]  for a fixed $L \in \mathbb{R}$ and any $x,y \in T$.  
 
 The collapsed metric essentially agrees with the path metric on $T$ we now show.  For $x,y \in T$ the distance of each point to $S$ is zero so
 \[\rho_\phi([x],[y]) = \rho_p(x',y') + r_x + r_y = \rho_p(x',y') \]
 and since $T$ is a Lipschitz curve in $X$ with Lipschitz constant $L$ we have 
  \[\rho_\phi([x],[y]) = \rho_p(x',y') \leq K \cdot \rho(x,y) \] 
  and the lemma follows.
 
  %We have two Lipschitz assumptions: $T$ is the image of a (globally) Lipschitz %continuous function $\phi$ and .  The derivative of $\phi$ exists almost everywhere %since $T$ is Lipschitz.  First assume 
  
  %The distance between $a,b\in T$ in the $\rho$ metric is the length of the geodesic %$\gamma$ connecting them 
  %\[\rho(a,b) = \int_a^b ||\gamma^{'}(t)|| dt\]
  
 %Let $l_T(a,b)$ be the length of the shortest path in $T$ between $a$ and $b$.Since both $T$ and $X$ are globally Lipschitz we can parameterize this path by 
 % \[l_T(a,b) = \lambda \circ \gamma (t)\] where $t \in [0,1]$ and $\lambda$ is a (globally) Lipschitz continuous function. So by the chain rule
  %\[l_T(a,b) = \int_0^1 ||(\lambda \circ \gamma (t))' ||dt = \int_0^1 || %\lambda^{'}(\gamma (t))\cdot \gamma^{'}(t) ||
  %dt.\]
  %Since $\lambda$ is (globally) Lipschitz there is a $K>1$ such that $|\lambda'(t)| \leq K$
  %  \[l_T(a,b) \leq \int_0^1 || L \cdot \gamma^{'}(t) || dt = L \int_0^1 || \gamma (t)' ||dt   = L \cdot \rho(a,b) \]
  %the integral above exists on all of $X$ because $T$ is differentiable.  For points on $a,b \in T$, $\rho_\phi([a],[b]) = l_T(a,b)$ so
%\[\frac{\rho_\phi([x],[y])}{\rho(x,y)} =  L.\] We define $K_L = L$ and the Lemma follows. 
\end{proof}

Note that since $x' \in T \subset \mathbb{R}^n$ it makes sense to evaluate the metric $\rho$ on $x'$ and $y'$.  

\begin{center}
\includegraphics[scale=.8]{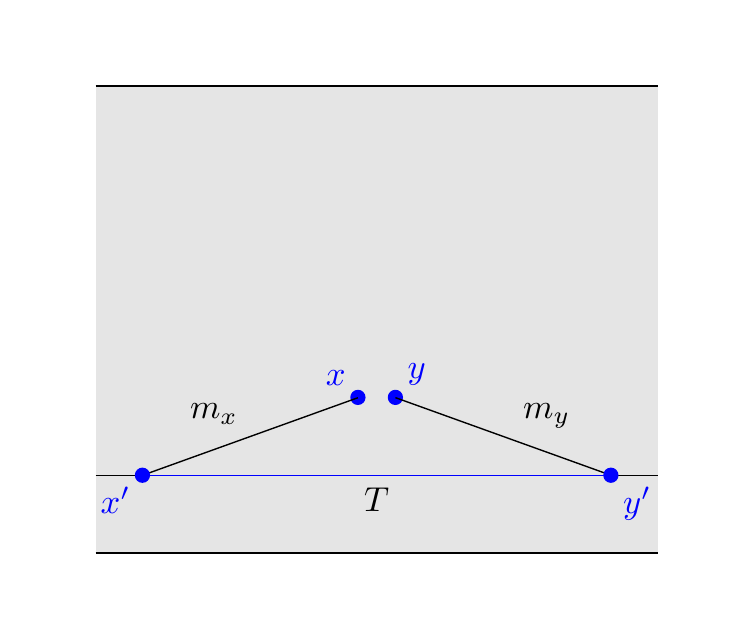}
\captionof{figure}{The lengths of $m_x$ and $m_y$ must each be less than the length of the longest fiber.}
\end{center}

The next lemmas will all be based on the following assumptions.  Let $(X,\rho)$ be a $(b,c)-$metric space and assume $\phi$ is a collapsing map defined on a collapsing set $S$.  Let $(X^*, \rho_\phi)$ be the semi-metric space from Theorem \ref{collapsing map}.  Because $S$ is a bounded fibered manifold, there exists a fiber of maximal length and this length we call $f$.  In the Lemma \ref{fiber} we compare distances between points in $S$ and their images under the collapsing map.  

\begin{lemma} \label{fiber}
Let $S$ be a collapsing set and let $x,y \in S$ and  $x',y' \in T$ as defined as in Definition \ref{collapsing set}. 
 The $\rho$-distance between $x'$ and $y'$ differs from the $\rho$-distance of $x$ and $y$ by an additive constant determined by the length of the longest fiber.  That is, 
    %\[\rho_1(x',y') \leq b(\rho_1(x,y) + 2f) + 2c\]
    \[\rho(x',y') \leq b^2 \rho(x,y) + (b^2f + bc + c).\]
    where $f$ is the length of the longest fiber of $S$.
\end{lemma}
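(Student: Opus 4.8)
The plan is to exploit two features of the collapsing set: first, that $x'$ and $y'$ lie on the same fibers as $x$ and $y$ respectively, so their $\rho$-distances to $x$ and $y$ are controlled by the length $f$ of the longest fiber; and second, that the relaxed $(b,c)$-triangle inequality lets me transfer an estimate on the pair $\{x,y\}$ to the pair $\{x',y'\}$ at the cost of these fiber distances together with the usual multiplicative factor $b$ and additive factor $c$. The whole argument is thus a fiber-length bound followed by two applications of the relaxed inequality and a collection of constants.

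First I would establish the fiber bound. Since $x' = T \cap [x]$, the points $x$ and $x'$ are equivalent under $\sim$ and hence lie on a common fiber $\pi^{-1}(z)$ of $S$. Because $S$ is a bounded fibered manifold, this fiber is a curve of length at most $f$; and since $\rho$ is bounded above by the length of any connecting path, the sub-arc of the fiber joining $x$ to $x'$ gives $\rho(x,x') \leq f$. The identical argument applied to the fiber through $y$ gives $\rho(y,y') \leq f$. Next I would chain the relaxed inequality along $x' \to x \to y \to y'$: one application yields $\rho(x',y') \leq b(\rho(x',x) + \rho(x,y')) + c$, and a second application to the middle term yields $\rho(x,y') \leq b(\rho(x,y) + \rho(y,y')) + c$. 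Substituting the second estimate into the first gives
\[ \rho(x',y') \leq b\,\rho(x',x) + b^2\rho(x,y) + b^2\rho(y,y') + bc + c, \]
and replacing the two fiber distances by their bound $f$ produces an estimate of the form $\rho(x',y') \leq b^2\rho(x,y) + C'$, where the two applications of $+c$ collapse into the $bc + c$ part of the claimed constant and the fiber detours, each at most $f$, contribute the $f$-dependent part $b^2 f + bc + c$ asserted in the statement.

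I expect the only genuinely nontrivial point to be the geometric bound $\rho(x,x') \leq f$: it relies on the ambient metric never exceeding path length and on the uniform boundedness of fibers, both of which are guaranteed by the collapsing-set hypotheses (Definition \ref{collapsing set}, property (1)) but must be invoked explicitly rather than taken for granted. The remaining obstacle is bookkeeping of the additive terms, since each use of the relaxed inequality contributes its own $+c$ and multiplies the earlier additive terms by $b$; I would track these contributions carefully, using $b \geq 1$ and $c \geq 0$, to confirm that the multiplicative constant is exactly $b^2$ and that the additive remainder collapses to the stated $b^2 f + bc + c$.
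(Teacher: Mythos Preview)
Your approach is essentially identical to the paper's: both bound $\rho(x,x'),\,\rho(y,y') \leq f$ using that $x,x'$ (respectively $y,y'$) lie on a common fiber, and then apply the $(b,c)$-triangle inequality twice along the chain $x' \to x \to y \to y'$. One caution on the bookkeeping you flag at the end: the computation (yours and the paper's) actually produces an additive constant $(b^2+b)f + bc + c$, not the stated $b^2 f + bc + c$, so do not expect the $bf$ term to vanish---it is simply absorbed harmlessly into the constants used later.
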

\begin{proof}
If $\rho_1(x,y) \geq \rho_1(x',y')$ we are done since the fibering is non-trivial.  First consider the case that $b=1$ and $c=0$, the case that $(X,\rho)$ is a metric space.  Note that the mapping moves a point $x \in S$ to another point on the same fiber.  Thus the maximum distance $x'$ could be from $x$ is $f$.  Therefore from the triangle inequality 
\[ \rho(x,y') \leq \rho(x,y) + \rho(y,y') \leq \rho(x,y) + f  \] and 
\[ \rho(x',y') \leq \rho(x',x) + \rho(x,y') \leq f + \rho(x,y) + f \]
we see \[\rho(x',y') \leq \rho(x,y) + 2f.\]

More generally, in a $(b,c)-$metric space
\[ \rho(x,y') \leq b(\rho(x,y) + \rho(y,y'))+c \leq b\rho(x,y) + bf + c \] and 
\[ \rho(x',y') \leq b(\rho(x',x) + \rho(x,y')) \leq b(f + \rho(x,y')) + c \leq bf + b^2 \rho(x,y) + b^2f + bc  \]
we see \[\rho(x',y') \leq b^2 \rho(x,y) + (b^2f + bc + c).\]

\end{proof}

We begin our comparison of the $\rho$ and $\rho_\phi$ metrics begin by constructing bounds $K$ and $C$ that satisfy the \textit{upper} half of the quasi-isometry inequalities.  Furthermore these bounds will apply in the case that points be farther apart than the length of the longest fiber (all in the $\rho-$metric). 

\begin{lemma} \label{upper farther}
 There are constants $K,C$ such that \[\rho_\phi([x],[y]) \leq K \rho(x,y) + C\] for all $x,y \in S$ such that $\rho(x,y) \geq f$ where $f$ is the length of the longest fiber of $S$. 
\end{lemma}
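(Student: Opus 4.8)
The plan is to realize the collapsed distance between two points of $S$ as a path-distance along $T$ and then control that path-distance by chaining the two preceding lemmas. First I would observe that because $x,y \in S$ we have $r_x = r_y = 0$, so $\{x,y\}$ lies in the vicinity of $S$ and hence, by the definition of $\rho_\phi$,
\[
\rho_\phi([x],[y]) = \rho_p(x',y') + r_x + r_y = \rho_p(x',y'),
\]
where $x' = T \cap [x]$ and $y' = T \cap [y]$ are the representatives on $T$. This reduces the whole problem to estimating a path-length along the curve $T$.

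Next I would pass from the path-metric to the ambient metric using the Lipschitz hypothesis on $T$. Since $x', y' \in T$, Lemma \ref{ratio} (equivalently, the Lipschitz constant of the identity $i: (T,\rho) \to (T,\rho_p)$) gives
\[
\rho_p(x',y') \leq K_L\, \rho(x',y').
\]
It then remains only to compare $\rho(x',y')$ with $\rho(x,y)$, which is exactly the content of Lemma \ref{fiber}:
\[
\rho(x',y') \leq b^2\,\rho(x,y) + (b^2 f + bc + c).
\]
Combining the three displays yields
\[
\rho_\phi([x],[y]) \leq K_L\bigl(b^2 \rho(x,y) + b^2 f + bc + c\bigr) = K_L b^2\, \rho(x,y) + K_L(b^2 f + bc + c),
\]
so the lemma holds with $K = K_L b^2$ and $C = K_L(b^2 f + bc + c)$.

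The hypothesis $\rho(x,y) \geq f$ is not needed to produce \emph{some} admissible pair $(K,C)$, but it can be used to tidy the additive constant: bounding $f \leq \rho(x,y)$ lets one absorb the $K_L b^2 f$ term into the multiplicative part, leaving a constant $C$ depending only on $b,c$ and matching the regime (points farther apart than the longest fiber) in which this far-case estimate is meant to be applied.

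I expect no genuine obstacle here, since the bound is a direct composition of Lemma \ref{ratio} and Lemma \ref{fiber}; the only place demanding care is the bookkeeping around the primes. One must invoke Lemma \ref{ratio} for the points $x', y' \in T$ rather than for $x, y$ themselves, and remember that for points already lying on $T$ the representative map is the identity, so the Lipschitz estimate is correctly read as $\rho_p(x',y') \leq K_L \rho(x',y')$. The main practical risk is simply tracking the constants accurately through the two chained inequalities.
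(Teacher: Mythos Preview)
Your argument is correct and matches the paper's proof essentially line for line: both reduce $\rho_\phi([x],[y])$ to $\rho_p(x',y')$, apply Lemma~\ref{ratio} to get $\rho_p(x',y')\le K_L\rho(x',y')$, and then invoke Lemma~\ref{fiber}, arriving at the same constants $K=b^2K_L$ and $C=K_L(b^2f+bc+c)$. Your additional remark that the hypothesis $\rho(x,y)\ge f$ is not actually used in obtaining these constants is accurate; the paper also never invokes it in this proof.
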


\begin{proof}
 Then $\forall$ $x, y \in S : \rho$(x,y) $\geq$ f, we wish to show that $\rho_\phi([x],[y]) \leq K \rho(x,y)$.      Let $x',y'$ be defined as in Definition \ref{collapsing set}.  Therefore both $\{x',y'\} \in T$.  Since the equivalence class is independent of representative Lemma \ref{ratio} gives
\[\rho_\phi([x],[y]) = \rho_\phi([x'],[y']) \leq K_L \rho(x',y').\] Using Lemma \ref{fiber} substitution gives 
\[ \rho_\phi([x],[y]) \leq b^2 K_L \rho(x,y) + K_L(b^2 f  + bc + c) .\]
Therefore
\[\rho_\phi([x],[x]) \leq K \rho(x,y) + C .\] where $K = b^2 K_L $ and $C = K_L(b^2 f  + bc + c)$.
\end{proof}

Continuing with the upper $K$ we now construct bounds in the case that points are closer together than the length of the longest fiber.

\begin{lemma} \label{upper nearer}
There are constants $K,C$ such that \[\rho_\phi([x],[y]) \leq K \rho(x,y) + C\] for all $x,y \in S$ where $\rho(x,y) < f$. 
\end{lemma}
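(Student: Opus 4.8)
The plan is to combine Lemma~\ref{ratio} and Lemma~\ref{fiber} exactly as in the proof of Lemma~\ref{upper farther}, and then to exploit the hypothesis $\rho(x,y) < f$ to absorb the entire linear term into a single additive constant. The whole point of splitting off the near case is that here the collapsed distance can be bounded by a pure constant, which makes the quasi-isometry inequality trivial to satisfy.

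First I would observe that for $x,y \in S$ we have $r_x = r_y = 0$, so the pair $\{x,y\}$ is automatically in the vicinity of $S$ and the collapsed distance reduces to $\rho_\phi([x],[y]) = \rho_p(x',y')$, where $x' = T \cap [x]$ and $y' = T \cap [y]$. Since the equivalence class is independent of its representative we have $[x]=[x']$ and $[y]=[y']$, and both $x',y'$ lie on $T$; Lemma~\ref{ratio} then gives
\[\rho_\phi([x],[y]) = \rho_p(x',y') \leq K_L\,\rho(x',y').\]

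Next I would feed in Lemma~\ref{fiber}, which bounds the $\rho$-distance of the representatives on $T$ by $\rho(x',y') \leq b^2\rho(x,y) + (b^2 f + bc + c)$. Chaining the two inequalities yields
\[\rho_\phi([x],[y]) \leq K_L b^2 \rho(x,y) + K_L(b^2 f + bc + c).\]
Up to this point the argument is identical to Lemma~\ref{upper farther}; the new ingredient is the hypothesis $\rho(x,y) < f$. Applying it to the linear term gives $K_L b^2 \rho(x,y) < K_L b^2 f$, so the entire right-hand side is dominated by the constant $K_L(2 b^2 f + bc + c)$, and $\rho_\phi([x],[y])$ is bounded above by a pure constant in this regime.

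Finally I would record the bound in quasi-isometry form. Since the collapsed distance is bounded by a constant whenever $\rho(x,y) < f$, the inequality $\rho_\phi([x],[y]) \leq K\rho(x,y) + C$ holds once we set $C = K_L(2 b^2 f + bc + c)$; to keep the constants compatible with Lemma~\ref{upper farther} I would take $K = b^2 K_L$ and $C = K_L(2 b^2 f + bc + c)$, noting that this $C$ dominates the additive constant of Lemma~\ref{upper farther} so that the single pair $(K,C)$ validates the upper estimate on all of $S$. I do not anticipate a genuine obstacle: the content is carried entirely by the two preceding lemmas, and the only point that needs care is choosing one pair $(K,C)$ that simultaneously works in the near and far cases, so that the two upper bounds can later be merged when assembling the main theorem.
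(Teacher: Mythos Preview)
Your argument is correct and in fact more direct than the paper's. You chain Lemma~\ref{ratio} and Lemma~\ref{fiber} exactly as in Lemma~\ref{upper farther} to obtain $\rho_\phi([x],[y]) \leq K_L b^2\,\rho(x,y) + K_L(b^2 f + bc + c)$, and then use the hypothesis $\rho(x,y) < f$ only to absorb the linear term into the additive constant $C = K_L(2b^2 f + bc + c)$. The paper proceeds differently: it chooses auxiliary points $x'', y'' \in T$ with $\rho(x'',y'') = \rho(x,y) + 2f > f$, asserts the comparison $\rho_\phi([x'],[y']) \leq \rho_\phi([x''],[y''])$, and then invokes Lemma~\ref{upper farther} on the pair $(x'',y'')$, arriving at $C = K_L(3b^2 f + bc + c)$. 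Your route avoids both the assumption that $T$ is unbounded and the implicit monotonicity step used in the paper's inequality $\rho_\phi([x'],[y']) \leq \rho_\phi([x''],[y''])$, and it even yields a slightly smaller additive constant; the paper's route, by contrast, makes explicit that the near case formally reduces to the far case. As you also notice, the bound you reach \emph{before} invoking $\rho(x,y) < f$ already has the quasi-isometry form, so from your point of view the near/far split is a matter of bookkeeping rather than a genuine case distinction.
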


\begin{proof}
  Assume $x,y \in S$ and $\rho(x,y) < f$.  Recall from Lemma \ref{fiber} that $\rho(x',y') \leq \rho(x,y) + 2f$.  Starting from $x'$ and $y'$ we wish to move in $T$ to find points $x'', y''$ that are exactly $\rho(x,y) + 2f$ apart.  Assuming $T$ is unbounded this is possible.  If $T$ is unbounded the theorem is trivial, so let $x'', y'' \in T$ and assume $\rho(x'',y'') = \rho(x,y)+2f$.  Therefore 
\[\rho_\phi([x],[y]) = \rho_\phi([x'],[y']) \leq \rho_\phi([x''],[y'']).\]
Since $\rho(x'',y'') > f$, we apply Lemma \ref{upper farther} to $x'',y''$ so we have 
\[\rho_\phi([x''],[y'']) \leq b^2 K_L \rho(x'',y'') + K_L(b^2 f  + bc + c) \]
and by construction 
\[\rho(x'',y'') \leq (\rho(x,y) + 2f) \] so
\[\rho_\phi([x''],[y'']) \leq b^2 K_L ((\rho(x,y) + 2f) ) + K_L(b^2 f  + bc + c)\]
or 
\[ \rho_\phi([x''],[y'']) \leq  (b^2 K_L) \rho(x,y) + K_L(3b^2 f + bc + c)\]

Thus 
\[\rho_\phi([x],[x]) \leq K \rho(x,y) + C .\] where $K = b^2 K_L $ and $C = K_L(3b^2 f + bc + c)$.
\end{proof}

Up to now for all points $x,y \in S$ we have found a $K$ and $C$ that gives an upper bound for a quasi-isometry.  We now extend this to all of $\mathbb{R}^n$. 

\begin{lemma} \label{upper}
There are constants $K,C$ such that \[\rho_\phi([x],[y]) \leq K \rho(x,y) + C\] for all $x,y \in X$.
\end{lemma}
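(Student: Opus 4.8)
The plan is to reduce the general case to the already-settled case of points in $S$, leaning on the definition of $\rho_\phi$ and, crucially, on the vicinity condition itself. First I would split along the two branches of the definition of $\rho_\phi$. If $\{x,y\}$ is not in the vicinity of $S$, then $\rho_\phi([x],[y]) = \rho(x,y)$ by definition, and since any admissible $K \geq 1$ and $C \geq 0$ satisfy $\rho(x,y) \leq K\rho(x,y)+C$, this branch is immediate. Observe also that in this branch both $x$ and $y$ lie off $S$, since the paper notes that any pair containing a point of $S$ is automatically in the vicinity; this is what makes the identity $\rho_\phi = \rho$ legitimate here.

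The substantive branch is when $\{x,y\}$ is in the vicinity of $S$, so $\rho_\phi([x],[y]) = \rho_p(x',y') + r_x + r_y$ with $x',y' \in T$. The single most useful fact is the vicinity inequality, which reads $\rho(x,y) > r_x + r_y$: it already bounds the additive term $r_x + r_y$ by $\rho(x,y)$, and it bounds each of $r_x, r_y$ individually by $\rho(x,y)$. The latter is what controls how far the surrogate points $x', y'$ can sit from $x, y$. Taking $x'$ (resp. $y'$) to be the point of $T$ on the fiber through the nearest point of $S$ to $x$ (resp. $y$), the relaxed triangle inequality together with the maximal fiber length $f$ gives $\rho(x,x') \leq b(r_x + f)+c \leq b\rho(x,y) + (bf+c)$, and symmetrically for $\rho(y,y')$.

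With these estimates in hand I would bound $\rho_p(x',y')$ by reducing to $S$. Since $x',y' \in T \subseteq S$ their distances to $S$ vanish, so $\rho_p(x',y') = \rho_\phi([x'],[y'])$, and Lemmas \ref{upper farther} and \ref{upper nearer} together yield $\rho_p(x',y') \leq K_0\rho(x',y') + C_0$ with $K_0 = b^2 K_L$ and $C_0 = K_L(3b^2 f + bc + c)$. Two applications of the relaxed triangle inequality then convert $\rho(x',y')$ into a bound $A\rho(x,y) + B$, where $A,B$ are explicit in $b,c,f$ after substituting the displacement estimates above. Assembling everything and using $r_x + r_y < \rho(x,y)$,
\[
\rho_\phi([x],[y]) = \rho_p(x',y') + r_x + r_y \leq K_0\bigl(A\rho(x,y) + B\bigr) + C_0 + \rho(x,y),
\]
so $K = K_0 A + 1$ and $C = K_0 B + C_0$ work; taking the maximum of these against the trivial constants from the non-vicinity branch completes the proof.

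The step I expect to be the main obstacle is pinning down what $x'$ and $y'$ mean for points off $S$ and justifying $\rho(x,x') \leq b(r_x+f)+c$. The notation $x' = T \cap [x]$ was introduced only for $x \in S$, so some care is needed to say precisely which point of $T$ plays the role of $x'$ in the vicinity formula; once one commits to the reading that $x'$ lies on the fiber through the nearest point of $S$ to $x$, the remainder is a mechanical chain of relaxed triangle inequalities. The only genuinely load-bearing idea is that the vicinity condition $\rho(x,y) > r_x + r_y$ simultaneously tames the additive term and the displacement of $x,y$ from their surrogates, so that no unbounded quantity survives into the final constants $K$ and $C$.
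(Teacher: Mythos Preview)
Your argument is correct and follows essentially the same route as the paper: split on the vicinity condition, use $r_x+r_y<\rho(x,y)$ to absorb the additive term, bound $\rho_p(x',y')$ via the Lipschitz property of $T$, and then control $\rho(x',y')$ in terms of $\rho(x,y)$ through the relaxed triangle inequality and the fiber bound $f$.

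Two minor differences are worth noting. First, your detour through Lemmas~\ref{upper farther} and~\ref{upper nearer} to bound $\rho_p(x',y')$ is unnecessary: since $x',y'\in T$, Lemma~\ref{ratio} already gives $\rho_p(x',y')\le K_L\,\rho(x',y')$ directly, with no additive constant, which is what the paper uses. Second, and in your favor, the paper at this point simply invokes Lemma~\ref{fiber} to obtain $\rho(x',y')\le b^2\rho(x,y)+(b^2f+bc+c)$, even though that lemma is stated only for $x,y\in S$; your explicit displacement estimate $\rho(x,x')\le b(r_x+f)+c$ followed by two relaxed triangle inequalities is the honest way to carry out this step for $x,y\notin S$, and it is exactly the patch the paper's argument needs.
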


\begin{proof}
Let $r_x$ denote the radius of the smallest ball (in the $\rho$ metric) centered at $x \in X$ that intersects $S$.  Let $x_S$ denote a point in $S$ such that $\rho(x,x_S) = r_x$. 
 
  For any $x \in S$ Lemmas \ref{upper farther} and \ref{upper nearer} imply that constants $K,C$ exist.  Now assume $x,y \notin S$.  By definition $\rho(x,y) \leq r_x + r_y$ then \[\rho_\phi([x],[y]) = \rho(x,y)\] so the metric is unchanged.  Now assume $\rho(x,y) > r_x + r_y$.  This implies that is \{x,y\} is in the vicinity of $S$ so again by definition, \[\rho_\phi([x],[y]) = r_x + r_y + d_T(x', y')\] where $x' = T \cap [x]$.
 
 The vicinity assumption implies
\begin{equation}
\label{d2d1da}
    \rho_\phi([x],[y]) \leq \rho(x,y)+d_T(x',y').
\end{equation}

Since $x',y' \in T$, Lemma \ref{ratio} implies
\begin{equation}
\label{da}
d_T(x',y') \leq K_L(\rho(x',y')).
\end{equation}
and since
Lemma \ref{fiber}  implies
 \[\rho(x',y') \leq b^2 \rho(x,y) + (b^2f + bc + c)\]
%\[\rho(x',y') \leq 2f + r_x + r_y + \rho(x,y) \leq 2f + 2\rho(x,y).\]
we have
\begin{equation}
 d_T(x',y') \leq K_L (b^2 \rho(x,y) + (b^2f + bc + c) ) .
\label{dTx'y'}
\end{equation}

%Again, there are two cases. \\
%Case 1: If $\rho(x,y) \geq f$.

Combining equations (\ref{d2d1da}) and (\ref{dTx'y'}) gives
%and the main assumption of Case 1 give
\[\rho_\phi([x],[y]) \leq (b^2 K_L+1)\rho(x,y) + (b^2f + bc + c) ) \]

%Case 2: Here we assume $\rho(x,y) < f$.  Combining this with equation (\ref{d2d1da}) and ($\ref{dTx'y'})$ gives
%\[\rho_\phi([x],[y]) \leq f +K_L (b^2 \rho(x,y) + (b^2f + bc + c) ) \] 
%\[ =  (b^2 K_L) \rho(x,y) + ((b^2+1)f + bc + c) \]

By taking $K = b^2 K_L+1$ and $C = (b^2f + bc + c)$ the Lemma holds.

\end{proof}

It is worth nothing that the inequality $\ref{dTx'y'}$ above is the upper bound to proving that the $\rho$-geodesic between $x$ and $y$ is quasi-isometric to an arc of $T$ bounded by $x'$ and $y'$.

\begin{lemma} \label{lower}
There are constants $K,C$ such that \[\frac{\rho(x,y)}{K} - C \leq \rho_\phi([x],[y]) \mbox{ } \forall x,y \in X\]
\end{lemma}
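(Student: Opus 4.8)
The plan is to split the argument along the two cases in the definition of $\rho_\phi$ and to run the relaxed triangle inequality in the opposite direction from the upper bounds. If $\{x,y\}$ is \emph{not} in the vicinity of $S$ then $\rho_\phi([x],[y]) = \rho(x,y)$, and since any admissible $K \geq 1$, $C \geq 0$ satisfies $\tfrac{1}{K}\rho(x,y) - C \leq \rho(x,y)$, the inequality holds trivially. All the work lives in the vicinity case, where $\rho_\phi([x],[y]) = \rho_p(x',y') + r_x + r_y$ is a sum of non-negative quantities. There the goal is the dual of Lemma \ref{upper}: rather than bounding $\rho_\phi$ above by $\rho$, I want to bound $\rho(x,y)$ \emph{above} by a constant multiple of $\rho_p(x',y') + r_x + r_y$ plus an additive constant, since after dividing and rearranging this is exactly the desired lower bound.

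First I would control $\rho(x,y)$ by routing it through the associated points $x',y' \in T$. Two applications of the $(b,c)$-triangle inequality give $\rho(x,y) \leq b\,\rho(x,x') + b^2\rho(x',y') + b^2\rho(y',y) + (b+1)c$. Next I would absorb the two endpoint terms using the fiber structure: writing $x_S,y_S$ for the nearest points of $S$ so that $\rho(x,x_S) = r_x$ and $\rho(y,y_S) = r_y$, and noting that $x'$ (resp.\ $y'$) lies on the same fiber as $x_S$ (resp.\ $y_S$), one more application of the relaxed inequality together with the bound $f$ on fiber length yields $\rho(x,x') \leq b(r_x + f) + c$ and symmetrically $\rho(y,y') \leq b(r_y + f) + c$; this is the same fiber estimate used in Lemma \ref{fiber}. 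Finally, since $x',y' \in T$ and any path in $T$ is a path in $X$, the path metric dominates the ambient metric, $\rho(x',y') \leq \rho_p(x',y')$ (the reverse of the inequality supplied by Lemma \ref{ratio}).

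Substituting these three estimates into the routing inequality expresses $\rho(x,y)$ as a weighted sum of $\rho_p(x',y')$, $r_x$, and $r_y$ plus a constant assembled from $b$, $c$, and $f$. The coefficients that appear are $b^2$ on $r_x$, $b^3$ on $r_y$, and $b^2$ on $\rho_p(x',y')$, all bounded by $b^3$; because each of the three terms is non-negative, replacing every coefficient by $b^3$ only increases the right-hand side, so $\rho(x,y) \leq b^3\big(\rho_p(x',y') + r_x + r_y\big) + C' = b^3\,\rho_\phi([x],[y]) + C'$ for an explicit constant $C'$. Dividing through by $b^3$ and setting $K = b^3$ and $C = C'/b^3$ gives the lower bound in the vicinity case, and enlarging $K,C$ if necessary to also cover the trivial non-vicinity case completes the lemma.

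I expect the main obstacle to be bookkeeping of two kinds. The first is keeping the powers of $b$ from proliferating as the relaxed inequality is iterated along the five-segment route $x \to x_S \to x' \to y' \to y_S \to y$; the grouping chosen above is what caps the worst coefficient at $b^3$, and a careless chaining would inflate it. The second, more conceptual, point is the justification of $\rho(x',y') \leq \rho_p(x',y')$: this uses that the ambient metric on the manifold $X$ is a genuine length metric rather than a bare $(b,c)$-metric, so that ambient distance never exceeds path length. If sharper constants were desired, one could symmetrize the routing so that $r_x$ and $r_y$ enter with equal weight, but for the qualitative statement the crude bound $K = b^3$ suffices.
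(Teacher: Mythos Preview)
Your argument is essentially the paper's: route $\rho(x,y)$ through $x'$ and $y'$ by two applications of the $(b,c)$-triangle inequality, control the side legs $\rho(x,x')$, $\rho(y,y')$ via $r_x$, $r_y$ and the maximal fiber length $f$, and read off $K=b^3$. Your treatment of the middle term, bounding $\rho(x',y')\le \rho_p(x',y')$ and folding it directly into $\rho_\phi$, is in fact tidier than the paper's version (which ``drops $n$'' and reinstates $d_T(x',y')$ as an $\epsilon$ at the end); the length-metric caveat you flag is exactly the hypothesis that step needs.
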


\begin{proof}
Let $x,y \in X$ with $x \neq y$ and assume $\rho(x,x_S) = r_x$ and $\rho(y,y_S) = r_y$.  By Lemma \ref{fiber} there exist constants $M,N$ for which. 
\begin{eqnarray}
   \rho(x_S,x') \leq Mf + N  \label{mfn1} \\
   \rho(y_S,y') \leq Mf+N.   \label{mfn2}
\end{eqnarray}
  
   Let $\rho(x,x') = l$, $\rho(y,y') = m$ and $\rho(x',y') = n$.   By applying the relaxed triangle inequality twice we see there are constants $b \geq 1,c \geq 0$ such that
\[\rho(x,y) \leq  b^2(\rho(x,x')+\rho(x',y') + \rho(y',y)) + b(c+1) = b^2(l+n+m) + b(c+1).\] and dropping $n$ gives
\begin{equation}
    \frac{\rho(x,y)}{b^2} \leq l+m + \frac{c+1}{b}.
    \label{l+m}
\end{equation}
By the relaxed triangle inequality and inequality (\ref{mfn1}) and (\ref{mfn2}) 
\begin{eqnarray}
 l \leq b(r_x+Mf + N) + c \label{l} \\ 
 m \leq b(r_y+Mf + N) + c. \label{m}
\end{eqnarray}
 
Combining inequalities (\ref{l+m}), (\ref{l}) and (\ref{m}) gives
\[\frac{\rho(x,y)}{b^2} \leq b(r_x + r_y + 2b(Mf + N)) + 2c + \frac{c+1}{b}\] so 
\begin{equation}
\frac{\rho(x,y)}{b^3} - 2(bMf + N) - \frac{c(2b+1)+1}{b^2} %= r_x + r_y = \rho_\phi([x],[y]) - \epsilon
\label{amf}
\end{equation}
By definition $\rho_\phi([x],[y]) = r_x+r_y+d_T(x',y')$ and $x\neq y$ implies $d_T(x',y') = \epsilon > 0$ so 
\[\rho_\phi([x],[y]) = r_x+r_y + \epsilon \].  We set $K = b^3$ and $C = 2(bMf + N) - \frac{c(2b+1)+1}{b^2}$ and inequality (\ref{amf}) gives
\[\frac{\rho(x,y)}{K} - C = \rho_\phi([x],[y]) - \epsilon\]
and therefore
\[ \frac{\rho(x,y)}{K} - C \leq \rho_\phi([x],[y]) .\]
\end{proof}
 
Note that the lower bound $K$ depends only on $b$ coming from the $(b,c)-$ metric, and not on the Lipschitz constant.  In particular, for metric spaces the length of the longest fiber controls how much distances are decreased by the collapsing map.  

% We note that $K=1$ is a sufficient lower bound on the inequality.  This follows because while collapsing map can delete a segment of length $2f$ from any geodesic the new metric adds the length of an arc of $T$ connecting $x'$ to $y'$.  This addition cannot reduce lengths, thus $K = 1$ suffices.

\begin{theorem}
    Let $(X,\rho)$ be a $(b,c)-$metric space and let $\phi: X \longrightarrow \sfrac{X}{\sim}$ be a collapsing map that collapses subset $S$ via equivalence relation $\sim$. If $\rho_\phi$ is the collapsed metric on $\sfrac{X}{\sim}$ then there are constants $K$ and $C$ such that 
    \[\frac{1}{K}\rho(x,y) - C \leq \rho_\phi([x],[y]) \leq  K\rho(x,y)+C \mbox{ } \forall x,y \in X \]
    Then $\phi$ a quasi-isometry between $(X,\rho)$ and $(\sfrac{X}{\sim},\rho_\phi)$.
   
\end{theorem}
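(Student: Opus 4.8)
The plan is to assemble the main theorem directly from the two preceding global estimates, Lemma~\ref{upper} and Lemma~\ref{lower}, which together already furnish the two halves of the quasi-isometry inequality. The only work remaining is to reconcile the two pairs of constants these lemmas produce into a single pair $(K,C)$ valid for both inequalities simultaneously, and then to invoke the definition of quasi-isometry.

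First I would invoke Lemma~\ref{upper} to obtain constants $K_1 \geq 1$ and $C_1 \geq 0$ with $\rho_\phi([x],[y]) \leq K_1 \rho(x,y) + C_1$ for all $x,y \in X$, and Lemma~\ref{lower} to obtain constants $K_2 \geq 1$ and $C_2$ with $\tfrac{1}{K_2}\rho(x,y) - C_2 \leq \rho_\phi([x],[y])$ for all $x,y \in X$. I would then set $K = \max\{K_1, K_2\}$ and $C = \max\{C_1, C_2, 0\}$, so that $K \in [1,\infty)$ and $C \in [0,\infty)$ exactly as the definition of a quasi-isometry demands.

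The key observation is that both inequalities are preserved under enlargement of the constants. For the upper bound, replacing $K_1$ by $K \geq K_1$ and $C_1$ by $C \geq C_1$ only increases the right-hand side, so $\rho_\phi([x],[y]) \leq K_1 \rho(x,y) + C_1 \leq K \rho(x,y) + C$. For the lower bound, replacing $K_2$ by $K \geq K_2$ decreases $\tfrac{1}{K}\rho(x,y)$ while replacing $C_2$ by $C \geq C_2$ decreases $-C$, so $\tfrac{1}{K}\rho(x,y) - C \leq \tfrac{1}{K_2}\rho(x,y) - C_2 \leq \rho_\phi([x],[y])$. Hence both inequalities hold with the single pair $(K,C)$, and since every class in $X^*$ has a representative the map $\phi$ is surjective, so the full definition is met and $\phi$ is a quasi-isometry between $(X,\rho)$ and $(X^*,\rho_\phi)$.

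The substantive content of the argument is entirely carried by the two global lemmas, so this final step is essentially a synthesis. The only point requiring genuine care — and hence the main, if modest, obstacle — is the monotonicity bookkeeping that legitimizes choosing a common $(K,C)$ by taking maxima: in particular one must confirm that $K$ may be taken at least $1$ and $C$ at least $0$ without invalidating the bound inherited from Lemma~\ref{lower}, whose stated additive constant is not manifestly nonnegative and must therefore be absorbed by the $\max$ with $0$.
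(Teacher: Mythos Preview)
Your proposal is correct and follows essentially the same approach as the paper: both proofs simply invoke Lemma~\ref{upper} and Lemma~\ref{lower} and then reconcile their constants into a single pair $(K,C)$. The only cosmetic difference is that the paper writes down explicit formulas $K = b^3 L + 1$ and $C = K(3b^2 f + bc + c)$ and asserts these dominate the constants from both lemmas, whereas you take maxima; your monotonicity argument is arguably the cleaner of the two.
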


\begin{proof}
Let $K = b^3 L + 1$ and Let \[C = K(3b^2 f + bc + c).\] Since $L \geq 1$ ($T$ is a Lipschitz manifold) and $n \geq 1$ and $b \geq 1$ these definitions of $K$ and $C$ satisfy both Lemma \ref{upper} and \ref{lower}.  
Therefore $\phi$ is a quasi-isometry between (X,$\rho$) and ($\sfrac{X}{\sim}$,$\rho_\phi$).
\end{proof}

\begin{exmp}
Let $S = \{(x,y) \in \mathbb{R}^2 | \sin(x) -1 < y < \sin(x)+1$  be the region between two sine curves and let $T = \{(x,\sin(x))\}$ for all $x\in \R$. The set $T$ is a (globally) Lipschitz (n-1)-manifold.  Collapsing $S$ to $T$ and defining a collapsed metric as above yields a quasi-isometry between $\mathbb{R}^2$ and the set $\sfrac{\mathbb{R}^2}{\sim}$.
\end{exmp}

\begin{corollary} \label{cor collapse}
If $X$ is a (b,c)-metric space with collapsing map $\phi: X \rightarrow Y$, then $Y$ is a $(b',c')$-metric space.
\end{corollary}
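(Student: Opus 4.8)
The plan is to observe that this Corollary is an immediate consequence of two results already established: the main theorem, which asserts that every collapsing map is a quasi-isometry, and the Lemma of Section~1.1, which asserts that quasi-isometries carry $(b,c)$-metric spaces to $(b',c')$-metric spaces. The substantive work has already been done in those two places; what remains is to splice them together and verify that the hypotheses line up. So I would frame the proof as a short deduction rather than a fresh argument.

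First I would recall that the main theorem supplies constants $K$ and $C$ for which $\phi:(X,\rho)\to(X^*,\rho_\phi)$ satisfies the two-sided estimate $\frac{1}{K}\rho(x,y)-C \leq \rho_\phi([x],[y]) \leq K\rho(x,y)+C$, so that $\phi$ is a quasi-isometry in the sense of our definition, with target $Y = X^* = \sfrac{X}{\sim}$. Next I would invoke the earlier theorem guaranteeing that $(X^*,\rho_\phi)$ is a semi-metric space. This second citation is the one that actually needs attention: the $(b,c)$-preservation Lemma takes the semi-metric property of the target as a standing hypothesis, and without knowing that $\rho_\phi$ is genuinely a semi-metric one cannot legitimately apply it.

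With both ingredients in hand I would apply the preservation Lemma directly with $f=\phi$. Since $X$ is a $(b,c)$-metric space, $Y=X^*$ is a semi-metric space, and $\phi$ is a quasi-isometry, the Lemma returns that $Y$ is a $(b',c')$-metric space, which is exactly the assertion of the Corollary. Tracking the constants through that Lemma's proof gives the explicit values $b' = bK^2$ and $c' = 2bCK^2 + Kc + C$, where $b,c$ are the relaxed-triangle constants of $X$ and $K,C$ are the quasi-isometry constants furnished by the main theorem.

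The main obstacle here is bookkeeping rather than anything conceptual. One must ensure the collapsed space $(X^*,\rho_\phi)$ is on record as a semi-metric space before the preservation Lemma may be cited, and one must be careful that the $K$ and $C$ fed into that Lemma are precisely those produced by the main theorem, so that the resulting $b'$ and $c'$ are well defined. There is no new inequality to establish, and the argument is complete once the two prior results are quoted in the correct order.
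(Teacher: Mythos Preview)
Your proposal is correct and mirrors the paper's own proof almost exactly: the paper cites Theorem~1 (that $(X^*,\rho_\phi)$ is a semi-metric space) and Theorem~2 (that $\phi$ is a quasi-isometry), then applies Lemma~1.1 to conclude $Y$ is a $(b',c')$-metric space with the same explicit constants $b'=bK^2$ and $c'=2bCK^2+Kc+C$ you obtained.
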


\begin{proof}
By Theorem 1 and Theorem 2, we see that $\phi$ is a quasi-isometry and that Y = $(X^*,\rho_\phi)$ is a semi-metric space. Since we have these conditions, Lemma 1.1 applies and we see Y is a $(b',c')$-metric space where $b'$ =$bK^2$ and $c' = 2bCK^2 + Kc + C.$
\end{proof}

Note: if c, C = 0 $\implies$ $c'$ = 0 makes this a Lipschitz map between b-metrics.

\section{Conclusion}

In most literature, quasi-isometries have the domain and range be metric spaces. This does not necessarily need to be the case as seen in \cite{Gray} (note the alternative definition for semi-metric). Assuming we have a metric we have shown that a quasi-isometry will preserve the (b,c)-triangle inequality by Lemma 1.1. The space of all quasi-isometries is vast and it seems to be that many canonical examples of quasi-isometries can be represented as a series of collapsing maps. In example \ref{example_1} we see that floor function is a quasi-isometry. This can also be realized as countably many collapsing maps. Expanding on this a natural conjecture is:

\begin{conjecture}
Any quasi-isometry can be expressed as the composition of a collapsing map and Lipschitz map.
\end{conjecture}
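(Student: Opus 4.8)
\section*{Proof proposal for the conjecture}

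The plan is to realize the decomposition as $\psi = g \circ \phi$, where $\phi$ is a collapsing map on the domain and $g$ is an honestly Lipschitz map on the collapsed space $(X^*,\rho_\phi)$. The guiding observation is that the two halves of the quasi-isometry inequality play complementary roles: the lower bound $\tfrac{1}{K}\rho(x,y) - C \le \rho_Y(\psi(x),\psi(y))$ records that $\psi$ does not shrink distances too aggressively, which is exactly the behaviour a collapsing map is built to model (compare Lemma \ref{lower}), while the upper bound $\rho_Y(\psi(x),\psi(y)) \le K\rho(x,y)+C$ is a Lipschitz estimate corrupted only by the additive constant $C$. So the strategy is to let $\phi$ absorb the distance-reducing, small-scale behaviour of $\psi$, leaving a residual map $g$ whose distortion is purely multiplicative. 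Since $\phi$ produces a quotient, the order $g\circ\phi$ is the only sensible one.

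First I would build the collapsing set so that $\psi$ factors through $\phi$ exactly. Well-definedness of $g([x]) := \psi(x)$ forces the fibers of $S$ to \emph{refine} the partition of $X$ into level sets of $\psi$; subject to that constraint I would take the fibers as coarse as possible, bundling them into a bounded fibered neighbourhood $S$ with a Lipschitz transversal $T$ as in Definition \ref{collapsing set}. With such an $S$ the identity $\psi = g\circ\phi$ holds on the nose, because $\phi$ only identifies points already sharing a $\psi$-value, and $\phi$ is a genuine collapsing map by construction.

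The analytic heart is the Lipschitz estimate for $g$. Combining the upper bound for $\psi$ with the lower bound for the collapsed metric from Lemma \ref{lower}, rewritten as $\rho(x,y) \le K'\bigl(\rho_\phi([x],[y]) + C'\bigr)$, yields $\rho_Y(g([x]),g([y])) \le KK'\rho_\phi([x],[y]) + (KK'C' + C)$, which is Lipschitz \emph{up to an additive constant}. The point of tuning the fibers of $S$ to the scale $C$ is to force this residual constant to vanish: once points separated by no more than the additive slack have been identified, the separations that survive in $\rho_\phi$ should be large enough that the additive term is dominated by the multiplicative one, upgrading the rough Lipschitz bound to a true Lipschitz bound. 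Verifying that a collapse can simultaneously be fine enough to factor $\psi$ and coarse enough to achieve this cancellation is the step I expect to carry the real weight of the argument.

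The main obstacle is exactly this additive constant, and it is also where I expect the conjecture as literally stated to need refinement. The motivating floor-function example is already presented as \emph{countably many} collapsing maps, which strongly suggests that one collapse cannot in general remove all of the small-scale ambiguity: the distance-reducing behaviour of a quasi-isometry may be distributed across infinitely many scales or locations, whereas a single collapsing set is one bounded fibered submanifold. A second rigidity issue is that the level sets of an arbitrary $\psi$ need not be bounded fibered submanifolds admitting Lipschitz transversals, so matching Definition \ref{collapsing set} is not automatic, and the two demands on the fibering (fine enough to factor $\psi$, coarse enough to kill the constant) pull against each other. A realistic route to the full statement would therefore either enlarge the admissible class to allow countable compositions of collapsing maps, or establish the weaker form ``composition of (countably many) collapsing maps and a Lipschitz map,'' recovering the conjecture as stated precisely when the quasi-isometry's collapsing locus is a single bounded fibered submanifold.
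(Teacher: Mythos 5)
First, a point of order: the paper does not prove this statement --- it is offered as an open conjecture, with the floor-function example (realized as \emph{countably many} collapsing maps) as the only supporting evidence. So there is no proof of the paper's to compare against, and your text must stand on its own. It does not: it is a strategy outline whose two load-bearing steps are left unverified, and one of them faces a concrete obstruction. The exact factorization $\psi = g \circ \phi$ forces every fiber of the collapsing set to lie inside a level set of $\psi$. But a quasi-isometry can be injective while failing to be Lipschitz: take $\psi:\R\to\R$ with $\psi(x)=x$ for $x<0$ and $\psi(x)=x+1$ for $x\geq 0$, an injective quasi-isometry with $K=1$, $C=1$ whose distortion ratio blows up across the jump. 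Here the level sets are singletons, so no collapsing set in the sense of Definition \ref{collapsing set} (which requires an $n$-dimensional bounded fibered submanifold with positive-dimensional fibers and an $(n-1)$-dimensional Lipschitz transversal) can have its fibers refine them, and your construction cannot even begin. This shows that the ``tearing'' behaviour a quasi-isometry exhibits at small scales is not captured by identifications at all; it would have to be absorbed entirely by the change of metric $\rho \mapsto \rho_\phi$, which your argument never analyzes --- indeed your plan explicitly assigns that work to making the fibers coarse, which is exactly what injectivity forbids.

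Second, even where nontrivial fibers are available, your Lipschitz estimate for $g$ is a coarse bound of the form $\rho_Y(g([x]),g([y])) \leq KK'\rho_\phi([x],[y]) + (KK'C' + C)$, and the claimed upgrade to an honest Lipschitz bound by ``tuning the fibers to the scale $C$'' is asserted, not proved. For that cancellation you would need distinct classes of $X^*$ to be uniformly separated in $\rho_\phi$, and the paper's collapsed metric provides no such separation: two points on opposite sides of the collapsing set, each very close to it, lie in distinct classes at arbitrarily small $\rho_\phi$-distance. Your closing paragraph --- arguing that a single collapse is likely insufficient and that the realistic statement involves countably many collapses or a merely coarse-Lipschitz residual --- is a sensible critique, consistent with the paper's own framing of the floor function, but it is a critique of the conjecture rather than a proof of it. To make genuine progress you would need either (i) a construction, from an arbitrary quasi-isometry, of a single collapsing set satisfying Definition \ref{collapsing set} together with a proof that the induced $g$ is Lipschitz with respect to $\rho_\phi$, or (ii) a counterexample (the injective map above is a natural candidate) showing that the conjecture as literally stated fails and must be weakened.
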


It is also possible to generalize both (b,c)-metrics as well as quasi-isometries by extending the bounds to functions instead of linear terms. The generalized quasi-isometry would not preserve a lot of structure, but it would preserve some depending on the function used. There is also analogous collapsing maps accompanying these new definitions. 

\begin{exmp}
Here we define the generalized quasi-isometry to be:
$$\frac{1}{Q}\rho(x,y)^2 + \frac{1}{K}\rho(x,y)-C \leq \rho_\phi(f(x),f(y)) \leq Q \rho(x,y)^2+  K \rho(x,y) + C$$
for some constants $C, Q \in [0,\infty), K \in [1,\infty)$.
The generalized (b,c)-metric to be:
 \begin{enumerate}
    \item $d(x,y) \geq 0$ with $d(x,y) = 0 \iff x = y$
    \item $d(x,y) = d(y,x) $
    \item $d(x,z) \leq a(d(x,y)^2 + d(y,z)^2) + b(d(x,y) + d(y,z)) + c$
 \end{enumerate}

And the generalized collapsing map to be not have the restriction to lengths of fibers as well as instead of being Lipshitz it is 2-H\"older Continuous. Since the generalized collapsing map is 2-H\"older Continuous, the generalized quasi-isometry holds and by generalizing lemma 1.1 it produces an (a,b,c)-metric.

\end{exmp}

This example leads us to our second conjecture:

\begin{conjecture}
If we have a generalized quasi-isometry where our bounds are given by a function, a generalized (b,c)-metric and generalized collapsing map whose bounds are given my the same function then we have the generalized form of Theorem 2 and Corollary 2.1.
\end{conjecture}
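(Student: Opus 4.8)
The plan is to mirror, in the quadratic ($2$-Hölder) setting of the preceding example, the two-part structure that proves Theorem 2 and Corollary 2.1: first establish the \emph{generalized Corollary} (a generalized quasi-isometry carries a generalized $(a,b,c)$-metric to another generalized $(a',b',c')$-metric, the analogue of Lemma 1.1), and then establish the \emph{generalized Theorem} (the generalized collapsing map is itself a generalized quasi-isometry, the analogue of the chain Lemma \ref{ratio}--Lemma \ref{lower}). Throughout, ``generalized'' means the linear bounds are replaced by the quadratic-plus-linear-plus-constant bounds of the example, and the Lipschitz hypothesis on $T$ is replaced by $2$-Hölder continuity, so the generalized ratio estimate reads $\rho_\phi([x],[y]) = \rho_p(x',y') \leq L\,\rho(x,y)^2$ for $x,y \in T$.

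For the generalized Corollary I would follow Lemma 1.1 but track the degrees carefully. The decisive observation is that the quadratic \emph{lower} bound inverts favorably: from $\tfrac{1}{Q}\rho(x,y)^2 + \tfrac{1}{K}\rho(x,y) - C \leq \rho_\phi(f(x),f(y))$ one extracts both $\rho(x,y)^2 \leq Q(\rho_\phi(f(x),f(y)) + C)$ and $\rho(x,y) \leq K(\rho_\phi(f(x),f(y)) + C)$, so every occurrence of a squared distance in the generalized $(a,b,c)$-triangle inequality on $X$ becomes \emph{linear} in the target distances. Writing $P = \rho_\phi(f(x),f(y))$ and $R = \rho_\phi(f(y),f(z))$, the triangle inequality then yields a bound $\rho(x,z) \leq \alpha(P+R) + \beta$ that is linear in $P,R$; feeding this into the quadratic \emph{upper} bound and using $(P+R)^2 \leq 2(P^2 + R^2)$ to absorb the cross-term produces exactly an inequality of the shape $\rho_\phi(f(x),f(z)) \leq a'(P^2 + R^2) + b'(P + R) + c'$. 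The degree never exceeds two, so $Y$ is a generalized $(a',b',c')$-metric.

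For the generalized Theorem the outer shape of the lemma chain transfers mechanically: the generalized ratio lemma supplies the upper comparison on $T$, and the lower bound (Lemma \ref{lower}) used only the triangle inequality, not the Lipschitz constant, so its quadratic analogue should go through with $\rho(x,y)^2$ now appearing on the left of the lower inequality. I expect the genuine difficulty to be the analogue of Lemma \ref{fiber}. Bounding $\rho(x',y')$ there requires two applications of the triangle inequality, and in the generalized setting the inner estimate $\rho(x,y') \leq a(\rho(x,y)^2 + \rho(y,y')^2) + b(\rho(x,y)+\rho(y,y')) + c$ is already quadratic; substituting it into the outer estimate, which squares $\rho(x,y')$, produces a $\rho(x,y)^4$ term and breaks out of the degree-two framework. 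This degree escalation is the main obstacle, and it cannot be avoided by the chaining used in the linear case.

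To finish I would attack the fiber lemma by replacing the two-step argument with a single-step estimate that never squares an already-quadratic quantity. The natural route is to work entirely in the path metric: route from $x$ to $y$ through $x'$ and $y'$ along $T$ and along fibers, apply the generalized triangle inequality \emph{once} to the resulting three legs, and control the fiber legs by the $2$-Hölder hypothesis rather than by a fiber-length constant, which the generalized collapsing map no longer supplies. If the fiber displacements $\rho(x,x')$ and $\rho(y,y')$ admit a bound $C_0\rho(x,y) + C_1$ --- the growth restriction implicit in trading bounded fibers for $2$-Hölder control --- then a single application keeps everything at degree two and yields $\rho(x',y') \leq a''\rho(x,y)^2 + b''\rho(x,y) + c''$. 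With this in hand the generalized upper bounds (the analogues of Lemmas \ref{upper farther}, \ref{upper nearer}, and \ref{upper}) and the generalized lower bound assemble, as in Theorem 2, into the two-sided quadratic inequality defining a generalized quasi-isometry; combining this with the generalized Corollary then shows $X^*$ is a generalized $(a',b',c')$-metric, completing the generalized forms of both statements.
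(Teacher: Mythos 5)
First, a framing point: the statement you are proving is left as an open conjecture in the paper --- there is no proof of it to compare against --- so your proposal stands or falls on its own merits. Its first half does stand: your generalized version of Lemma 1.1 is correct. Dropping the nonnegative terms in the lower bound to extract both $\rho(x,y)^2 \leq Q(\rho_\phi(f(x),f(y))+C)$ and $\rho(x,y) \leq K(\rho_\phi(f(x),f(y))+C)$, feeding the resulting \emph{linear} bound on $\rho(x,z)$ into the quadratic upper bound, and absorbing the cross term with $(P+R)^2 \leq 2(P^2+R^2)$ keeps every exponent at two, which is exactly what the generalized Corollary \ref{cor collapse} needs (modulo the caveat that $Q>0$).

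The second half has a genuine gap, and you half-see it yourself. You correctly identify the degree-escalation obstruction in the analogue of Lemma \ref{fiber}, but your repair does not work. (i) The generalized $(a,b,c)$-inequality is a three-point inequality; the chain $x' \to x \to y \to y'$ involves four points, so it cannot be applied ``once to the resulting three legs.'' Any honest chaining needs two applications, and the second application squares a quantity that is already quadratic in $\rho(x,y)$, so degree four reappears \emph{even if} you grant your fiber-displacement bound $\rho(x,x') \leq C_0\rho(x,y)+C_1$. (ii) That bound is not ``implicit'' in the hypotheses: the $2$-H\"older condition constrains only the identity map $(T,\rho)\to(T,\rho_p)$ and says nothing about fibers, while the generalized collapsing map by definition \emph{drops} the fiber-length restriction. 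Worse, applying your bound to two points on the same fiber (where $\rho(x,y)$ is small but $\rho(x,x')$ can be as large as the fiber is long) forces the fibers to be bounded after all, i.e., your hypothesis silently reinstates the restriction the conjecture removed. (iii) Without some such restriction the generalized Theorem 2 is simply false: if a fiber has unbounded diameter (e.g., collapse a half-plane fibered by vertical rays onto its boundary line), then points $x,y$ on one fiber satisfy $\rho_\phi([x],[y])=0$ while $\rho(x,y)$ is arbitrarily large, violating the required lower bound $\frac{1}{Q}\rho(x,y)^2+\frac{1}{K}\rho(x,y)-C \leq \rho_\phi([x],[y]) = 0$ for any fixed $Q,K,C$. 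This also refutes your claim that the analogue of Lemma \ref{lower} ``should go through'': in the linear case its constant $C$ depends explicitly on the maximal fiber length $f$, which no longer exists. So before any proof can be written, the conjecture itself must be reformulated with an explicit growth condition on fibers; your proposal supplies neither that reformulation nor a mechanism that keeps the fiber estimate at degree two.
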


\printbibliography

@article{Tom,
    author = {Suzuki Tomonari},
    title = {Basic inequality on a b-metric space and its applications},
    journal = {J Inequal Appl},
    year = {2017},
    volume = {256},
    number = {1},
    DOI = {https://doi.org/10.1186/s13660-017-1528-3},
    keywords = {b-metric}
}

@book{Dan,
author = {Benson Farb and Dan Margalit},
doi = {doi:10.1515/9781400839049},
url = {https://doi.org/10.1515/9781400839049},
title = {A Primer on Mapping Class Groups},
year = {2011},
publisher = {Princeton University Press},
ISBN = {9781400839049}
}

@book{Clay,
    author ={Matt Clay and Dan Margalit},
    title = {Office Hours with a Geometric Group Theorist},
    year = {2017},
    publisher = {Princeton University Press},
    keywords = {geometric group theory}
}

@article{Luu,
title = {Lipschitz and quasiconformal approximation of homeomorphism pairs},
journal = {Topology and its Applications},
volume = {109},
number = {1},
pages = {1-40},
year = {2001},
issn = {0166-8641},
doi = {https://doi.org/10.1016/S0166-8641(99)00145-5},
url = {https://www.sciencedirect.com/science/article/pii/S0166864199001455},
author = {Jouni Luukkainen},
keywords = {Lipschitz, Quasisymmetric, Quasiconformal, Manifold, Locally flat, Homeomorphism, Diffeomorphism, Approximation},
abstract = {Let CAT denote either the category LIP of locally bi-Lipschitz embeddings or the category LQC of locally quasiconformal embeddings. We prove that homeomorphisms between locally CAT flat CAT manifold pairs of arbitrary codimension can be approximated by CAT homeomorphisms, at least if there are no induced 4-submanifolds. It follows that a locally flat topological manifold pair satisfying the same dimensional restrictions admits a locally CAT flat CAT manifold pair structure. In the case of empty submanifolds these results are due to Sullivan (no boundaries) and Tukia and Väisälä (boundaries allowed).}
}

@article{Gray,
 ISSN = {00029947},
 URL = {http://www.jstor.org/stable/23513361},
 abstract = {We study groups acting by length-preserving transformations on spaces equipped with asymmetric, partially-defined distance functions. We introduce a natural notion of quasi-isometry for such spaces and exhibit an extension of the Švarc-Milnor lemma to this setting. Among the most natural examples of these spaces are finitely generated monoids and semigroups and their Cayley and Schützenberger graphs. We apply our results to show that a number of important properties of monoids are quasi-isometry invariants.},
 author = {Robert Gray and Mark Kambites},
 journal = {Transactions of the American Mathematical Society},
 number = {2},
 pages = {555--578},
 publisher = {American Mathematical Society},
 title = {Groups acting on semimetric spaces and quasi-isometries of monoids},
 volume = {365},
 year = {2013}
}

\end{document}